\documentclass{article}
\usepackage[utf8]{inputenc}
\usepackage{graphicx,amsmath,amsthm,amssymb,hyperref}

\newtheorem{theorem}{Theorem}
\newtheorem{observation}[theorem]{Observation}
\newtheorem{lemma}[theorem]{Lemma}
\newtheorem{corollary}[theorem]{Corollary}
\newtheorem{problem}[theorem]{Problem}

\newcommand{\GG}{\mathcal{G}}
\newcommand{\CC}{\mathcal{C}}

\usepackage{xcolor}

\title{Characterization of sparse monotone graph classes with bounded domination-to-2-independence ratio}
\author{Marthe Bonamy\thanks{CNRS, LaBRI, Université de Bordeaux, France. \url{marthe.bonamy@u-bordeaux.fr} Supported by the ANR project ENEDISC ANR-24-CE48-7768-01.}\and Zdeněk Dvořák\thanks{Charles University, Prague, Czech Republic.
	\url{rakdver@iuuk.mff.cuni.cz}
	Supported by ERC-CZ project LL2328 (Beyond the Four Color Theorem) of the Ministry of Education of Czech Republic.}
\and Lukas Michel\thanks{Mathematical Institute, University of Oxford, United Kingdom. \url{lukas.michel@maths.ox.ac.uk}}\and David Mikšaník\thanks{Charles University, Prague, Czech Republic.
	\url{miksanik@iuuk.mff.cuni.cz}
	Supported by project 8J24FR005 (Tight bounds for fractional colouring) of the Ministry of Education of Czech Republic.}
}
\date{December 2025}

\begin{document}

\maketitle

\begin{abstract}
We give an exact characterization of monotone graph classes $\GG$ with bounded average degree that satisfy the following
property: The domination number of every graph from $\GG$ is bounded by a linear function of its 2-independence number.
\end{abstract}

\section{Introduction}

In this paper, we consider the relationship between two fundamental graph parameters, the domination number and the $2$-independence
number.
The \emph{domination number} $\gamma(G)$ of a graph $G$ is the minimum size of a \emph{dominating
set} in~$G$, i.e., a set $D\subseteq V(G)$ such that every vertex of $G$ belongs to $D$ or has a neighbor in $D$.
The \emph{$2$-independence number} $\alpha_2(G)$ of $G$ is the maximum size of a \emph{$2$-independent
set} in $G$, i.e., a set $A\subseteq V(G)$ such that the distance between any two distinct vertices of $A$ in $G$
is greater than two.

A 2-independent set of size $k$ cannot be dominated by less than $k$
vertices of $G$, and thus we clearly have $\alpha_2(G)\le \gamma(G)$.
On the other hand, the domination number of a graph is in general not bounded by
any function of its 2-independence number, even in sparse graphs.  Indeed, for every integer $k\ge 3$,
there exists a bipartite $3$-degenerate graph with $k(k+1)/2+1$ vertices, $2$-independence number $2$,
and domination number at least $k/2$, see~\cite{apxdomin}.

The relationship between the two parameters becomes clearer when we notice
that their fractional relaxations coincide.
\begin{itemize}
\item The \emph{fractional domination number} $\gamma^\star(G)$ of a graph $G$ is the minimum of $\sum_{v\in V(G)} f(v)$ over all functions
$f:V(G)\to\mathbb{R}_{\ge 0}$ such that $$\sum_{v\in N[u]} f(v)\ge 1$$
holds for every vertex $u\in V(G)$.  These constraints are satisfied by the characteristic function
of a dominating set, and thus $\gamma^\star(G)\le\gamma(G)$.
\item The \emph{fractional $2$-independence number} $\alpha_2^\star(G)$ of $G$
is the maximum of $\sum_{v\in V(G)} g(v)$ over all functions
$g:V(G)\to\mathbb{R}_{\ge 0}$ such that $$\sum_{v\in N[u]} g(v)\le 1$$
holds for every vertex $u\in V(G)$.  These constraints are satisfied by the characteristic function
of a $2$-independent set, and thus $\alpha_2(G)\le \alpha^\star_2(G)$.
\end{itemize}
It is easy to see that the linear programs defining $\gamma^\star(G)$
and $\alpha^\star_2(G)$ are dual to each other, and consequently $\alpha^\star_2(G)=\gamma^\star(G)$.

Thus, it is natural to ask for which graphs $G$ the gap between
$\alpha_2(G)$ and $\gamma(G)$ is small. For further motivation, note that both domination number and $2$-independence number
of a graph are hard to approximate in general~\cite{domcompl,khot2001improved}\footnote{For any graph $G$, if the graph $G'$ is obtained from $G$ by subdividing every edge exactly once and adding a vertex
adjacent to all the subdivision vertices, then $\alpha(G)\le \alpha_2(G')\le \alpha(G)+1$, and thus the hardness of approximation
of the $2$-independence number follows from the hardness of approximation of the independence number.}.
However, if the ratio $\gamma(G)/\alpha_2(G)$ is small, then they are both approximated by the value of their common fractional relaxation, which can be determined in polynomial time.
Thus, we define the \emph{domination-to-2-independence ratio} of a graph class~$\GG$
as $\sup_{G\in\GG} \tfrac{\gamma(G)}{\alpha_2(G)}$.
\begin{problem}\label{prob-linear}
Which graph classes have bounded domination-to-2-independence ratio?
\end{problem}
Let us remark that the following relaxed version of this problem also seems non-trivial.
We say that a graph class~$\GG$ has \emph{domination tied to 2-independence} if there exists a function $h$ such that $\gamma(G)\le h(\alpha_2(G))$
holds for every graph $G\in \GG$.  Note that the class~$\GG$ has bounded domination-to-2-independence ratio exactly when this
function $h$ is linear.
\begin{problem}\label{prob-tied}
Which graph classes have domination tied to 2-independence?
\end{problem}
Problem~\ref{prob-linear} has been rather thoroughly studied, often from the perspective of the following equivalent interpretation:
The domination number corresponds to covering all vertices of the graph by the minimum number of closed neighborhoods,
while the $2$-independence number can be seen as packing maximum number of pairwise-disjoint closed neighborhoods in the graph.
The following graph classes are known to have bounded domination-to-2-independence ratio:
\begin{itemize}
\item Trees~\cite{trees}, strongly chordal graphs~\cite{stroch}, and dually chordal graphs~\cite{duach} (ratio $1$).
\item Outerplanar graphs~\cite{bonamy2025graph}, connected biconvex graphs~\cite{gg}, and claw-free subcubic graphs~\cite{delta} (ratio at most $2$).
\item Bipartite cubic graphs~\cite{gg} (ratio at most $\tfrac{120}{49}$).
\item Asteroidal triple-free graphs~\cite{bonamy2025graph} and convex graphs~\cite{bonamy2025graph} (ratio at most $3$).
\item $2$-degenerate graphs~\cite{bonamy2025graph} (ratio at most $7$).
\item Unit disk graphs~\cite{bonamy2025graph} (ratio at most $32$).
\item Graphs of maximum degree $\Delta$~\cite{delta} (ratio at most $\Delta$).
\item $K_{q,r}$-minor-free graphs~\cite{bohme2003domination} (ratio less than $4r+(q-1)(r+1)$),
and consequently all proper minor-closed classes.
\item Better bounds are known for specific proper minor-closed classes~\cite{bonamy2025graph}: planar graphs (ratio at most $10$) and
graphs of treewidth $k$ (ratio at most $k$).
\item Graphs of twinwidth at most $k$~\cite{bonamy2025graph} (ratio at most $4k^2$).
\item Graphs with weak 2-coloring number at most $c$~\cite{apxdomin} (ratio at most $c^2$).
\end{itemize}
Let us remark that all graph classes with bounded expansion (which includes many natural classes of sparse graphs,
and in particular all proper topological minor-closed graph classes) have bounded weak $2$-coloring numbers~\cite{zhu2009colouring};
thus, the last result is a qualitative generalization of many of the previous ones (though giving worse bounds
on the domination-to-2-independence ratio).
For the negative results, the following graph classes do not have domination tied to $2$-independence:
\begin{itemize}
\item Cartesian products of cliques~\cite{caprod}.
\item Bipartite $3$-degenerate graphs~\cite{apxdomin}.
\item Split graphs~\cite{bonamy2025graph}.
\end{itemize}

Many of these previously considered graph classes are \emph{monotone} (closed on subgraphs) and have bounded average degree.
For these graph classes, the ratio between the domination number and the fractional domination number is bounded by a
constant~\cite{BANSAL201721} (in fact, an even stronger claim holds, see Lemma~\ref{lemma-near2ind} below).  Thus, in this setting, the problem reduces to considering the
relationship between the $2$-independence number and its fractional relaxation.
As our main result, we solve Problem~\ref{prob-linear} for these graph classes.
That is, we provide an exact characterization of monotone graph classes with bounded average degree
that have bounded domination-to-2-independence ratio.

Specifically, we show that the following structure is the only obstruction.
A \emph{cascade} is a pair $(H,\psi)$, where $\psi$ is a proper
coloring of a graph $H$ by colors \texttt{v}, \texttt{e}, and \texttt{d}
such that every vertex of color \texttt{e} has exactly two neighbors
of color \texttt{v} and exactly one neighbor of color \texttt{d},
and $H$ has no other edges. We say that the cascade \emph{appears} in a graph $G$ if $H$ is a subgraph of $G$,
and that a graph class~$\GG$ \emph{contains} the cascade if $H\in\GG$.
The \emph{foundation} of the cascade is the graph $F$ with vertex
set $\psi^{-1}(\mathtt{v})$ and with two vertices of $F$ adjacent
if and only if they have a common neighbor (of color \texttt{e}) in $H$.
Thus, the graph~$H$ is obtained from the \emph{$1$-subdivision} of $F$ (the graph resulting from subdividing each edge of $F$ exactly once) by adding vertices (of color \texttt{d})
dominating the subdivision vertices.
The \emph{slope} of the cascade is defined as
$$\frac{|\psi^{-1}(\mathtt{v})|}{\max(\alpha(F),|\psi^{-1}(\mathtt{d})|)};$$
i.e., the cascade has slope at least $s$ if and only if
\begin{align*}
\alpha(F)&\le \frac{|\psi^{-1}(\mathtt{v})|}{s}\text{ and}\\
|\psi^{-1}(\mathtt{d})|&\le \frac{|\psi^{-1}(\mathtt{v})|}{s}.
\end{align*}

\begin{observation}\label{obs-casc}
If $(H,\psi)$ is a cascade of slope $s$, then $\gamma(H)\ge \tfrac{s}{4}\alpha_2(H)$.
\end{observation}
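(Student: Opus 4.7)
My plan is to prove the observation by establishing two separate bounds, $\gamma(H)\ge \tfrac{1}{2}|\psi^{-1}(\mathtt{v})|$ and $\alpha_2(H)\le \tfrac{2}{s}|\psi^{-1}(\mathtt{v})|$, whose ratio is exactly $s/4$. Write $n_V:=|\psi^{-1}(\mathtt{v})|$ and $n_D:=|\psi^{-1}(\mathtt{d})|$ for brevity.

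For the lower bound on $\gamma(H)$, I would consider any dominating set $D^{*}$ and exploit the restricted adjacency of a cascade: every \texttt{v}-vertex is only adjacent to \texttt{e}-vertices, and each \texttt{e}-vertex has exactly two \texttt{v}-neighbors. Vertices of color \texttt{d} in $D^{*}$ do not help dominate any \texttt{v}-vertex, so $|D^{*}\cap\psi^{-1}(\mathtt{v})|+2|D^{*}\cap\psi^{-1}(\mathtt{e})|\ge n_V$, and therefore $|D^{*}|\ge |D^{*}\cap\psi^{-1}(\mathtt{v})|+|D^{*}\cap\psi^{-1}(\mathtt{e})|\ge n_V/2$.

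For the upper bound on $\alpha_2(H)$, let $A$ be a maximum $2$-independent set, split as $A=A_V\cup A_E\cup A_D$ by color. Two distinct \texttt{v}-vertices in $A_V$ cannot share an \texttt{e}-neighbor (otherwise they would be at distance $2$ in $H$), so $A_V$ is independent in $F$; hence $|A_V|\le\alpha(F)\le n_V/s$ by the slope condition. The crucial step is to bound $|A_E|+|A_D|$ together. Each $e\in A_E$ has a unique \texttt{d}-neighbor $d(e)$; since $A$ is $2$-independent, distinct elements of $A_E$ give distinct $d(e)$, and no $d(e)$ lies in $A_D$. Thus $A_D$ and $\{d(e):e\in A_E\}$ are disjoint subsets of $\psi^{-1}(\mathtt{d})$, so $|A_E|+|A_D|\le n_D\le n_V/s$. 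Summing gives $\alpha_2(H)\le 2n_V/s$, and combining with $\gamma(H)\ge n_V/2$ yields the observation.

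The step I expect to be trickiest is bounding $|A_E|$. The direct approach---viewing $A_E$ as a matching in $F$ and invoking $\mu(F)\le n_V/2$---is hopeless: it gives $\alpha_2(H)=\Theta(n_V)$ and hence no useful ratio. The key insight is to route the bound through the \texttt{d}-side of the cascade instead, using the second half of the slope condition ($n_D\le n_V/s$), and to absorb $|A_D|$ into the same count by noting that the \texttt{d}-neighbors of $A_E$ lie disjointly from $A_D$ inside the comparatively small set $\psi^{-1}(\mathtt{d})$.
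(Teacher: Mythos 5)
Your proposal is correct and follows essentially the same route as the paper: the lower bound $\gamma(H)\ge\tfrac12|\psi^{-1}(\mathtt{v})|$ comes from each closed neighborhood containing at most two \texttt{v}-vertices, and the upper bound $\alpha_2(H)\le\tfrac2s|\psi^{-1}(\mathtt{v})|$ splits $A$ into its \texttt{v}-part (independent in $F$) and the rest (injected into $\psi^{-1}(\mathtt{d})$, which the paper phrases as covering $V(H)\setminus\psi^{-1}(\mathtt{v})$ by closed neighborhoods of \texttt{d}-vertices). Your explicit injection $e\mapsto d(e)$ is just a more detailed rendering of that same covering argument.
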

\begin{proof}
Note that the closed neighborhood of any vertex of $H$ contains
at most two vertices of $\psi^{-1}(\mathtt{v})$, and thus
\begin{equation}\label{ineq-cascgamma}
\gamma(H)\ge \frac{1}{2}|\psi^{-1}(\mathtt{v})|.
\end{equation}
On the other hand, consider any $2$-independent set $A$ in $H$.
Observe that $A\cap \psi^{-1}(\mathtt{v})$ is an independent set
in the foundation $F$ of the cascade $(H,\psi)$, and thus
$$|A\cap \psi^{-1}(\mathtt{v})|\le \alpha(F)\le \frac{|\psi^{-1}(\mathtt{v})|}{s}.$$
Moreover, all vertices of $V(H)\setminus \psi^{-1}(\mathtt{v})$ are contained
in closed neighborhoods of vertices of color \texttt{d}, and thus
$$|A\setminus \psi^{-1}(\mathtt{v})|\le |\psi^{-1}(\mathtt{d})|\le \frac{|\psi^{-1}(\mathtt{v})|}{s}.$$
Consequently,
$$\alpha_2(H)\le \frac{2}{s}|\psi^{-1}(\mathtt{v})|.$$
The claim of the lemma follows by combining this inequality with (\ref{ineq-cascgamma}).
\end{proof}
Thus, if a class of graphs contains cascades of arbitrarily large slope, then it does not
have bounded domination-to-2-independence ratio.  We prove that the converse holds for monotone graph classes
of bounded average degree.
\begin{theorem}\label{thm-main}
The following claims are equivalent for any monotone graph class~$\GG$ with bounded average degree:
\begin{itemize}
\item The class~$\GG$ has bounded domination-to-2-independence ratio.
\item There exists a real number $c$ such that every cascade contained in $\GG$ has slope at most $c$.
\end{itemize}
\end{theorem}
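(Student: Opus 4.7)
One direction follows from Observation~\ref{obs-casc}: cascades of slope $s$ give $\gamma/\alpha_2\ge s/4$, so unbounded slopes preclude a bounded ratio. The content of the theorem is the converse, which I would prove by contraposition. Fix a bound $\bar d$ on the average degree in $\GG$, and suppose $G\in\GG$ satisfies $\gamma(G)/\alpha_2(G)\ge R$ for some large~$R$; the goal is to exhibit a cascade contained in $\GG$ of slope $\Omega_{\bar d}(R)$. The initial move invokes Lemma~\ref{lemma-near2ind}, which for monotone bounded-average-degree classes gives $\gamma(G)\le C(\bar d)\,\alpha_2^\star(G)$. Combined with LP duality $\alpha_2^\star(G)=\gamma^\star(G)$, this reduces the task to the fractional version: assuming $\alpha_2^\star(G)/\alpha_2(G)\ge R/C$, produce a cascade of slope $\Omega(R)$.

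Starting from an optimal fractional $2$-independent function $g$ with $\sum g=\alpha_2^\star(G)$, I would extract the colour-\texttt{v} part by LP sparsification: sample vertices independently with probability proportional to $g$ and clean up close pairs. This produces $V^\star\subseteq V(G)$ of size $\Theta(\alpha_2^\star(G))$. The foundation $F$ will be a subgraph of the conflict graph $F^\star$ on $V^\star$ whose edges record distance-$\le 2$ pairs in $G$; crucially, every independent set of $F^\star$ is $2$-independent in $G$, so $\alpha(F^\star)\le\alpha_2(G)\le|V^\star|/\Omega(R)$. For each intended edge $uv\in F$ I would pick a distinct common $G$-neighbour $w_{uv}$ to be the associated \texttt{e}-vertex, and then choose \texttt{d}-vertices that dominate the witnesses. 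Properness of $\psi$ can be enforced by picking witnesses and dominators outside $V^\star$ and pairwise non-adjacent inside $H$, which is compatible with the definition since the only edges of $H$ are between \texttt{e} and $\{\mathtt{v},\mathtt{d}\}$.

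The main obstacle is the joint sizing of $F$ and $\psi^{-1}(\mathtt{d})$: removing edges from $F^\star$ can only increase $\alpha(F)$, yet taking $F=F^\star$ may produce more \texttt{e}-vertices than a small \texttt{d}-set can dominate. A Tur\'an-type calculation shows that any $F$ with $\alpha(F)\le|V^\star|/R$ must carry $\Omega(|V^\star|R)$ edges, so the \texttt{d}-set must collectively cover that many witnesses while having size at most $|V^\star|/R$; hence the average $G$-degree of a \texttt{d}-vertex has to be $\Omega(R^2)$. The crux of the proof is therefore to show that the hypothesis $\alpha_2^\star(G)/\alpha_2(G)\ge R/C$ forces the existence of enough high-degree hubs in $G$ whose neighbourhoods create the requisite distance-$2$ witnesses among $V^\star$. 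A natural route is via the dual LP or an integrality-gap analysis: the gap between $\alpha_2^\star$ and $\alpha_2$ must be witnessed by some rich local structure, from which the \texttt{d}-vertices and distance-$2$ witnesses can be extracted directly. The resulting cascade is a subgraph of $G\in\GG$ by monotonicity, contradicting the assumed slope bound.
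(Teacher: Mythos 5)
Your first direction and the overall skeleton of the converse (extract a large ``nearly $2$-independent'' set via Lemmas~\ref{lemma-outdeg} and~\ref{lemma-near2ind}, use it for the colour-\texttt{v} vertices, take the common-neighbour conflict graph as the foundation, and observe that its independence number is at most $O(\alpha_2(G))$) do match the paper. But the step you yourself identify as the crux --- producing a set of \texttt{d}-vertices of size at most $|\psi^{-1}(\mathtt{v})|/\Omega(R)$ that dominates the witnesses --- is exactly where the proposal stops being a proof. ``A natural route is via the dual LP or an integrality-gap analysis: the gap must be witnessed by some rich local structure'' is a hope, not an argument, and it also points in the wrong direction: the paper does not hunt for high-degree hubs at all. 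Instead, Lemma~\ref{lemma-outdom} runs $m-d$ rounds of greedily chosen maximal ``out-$2$-independent'' sets $X_1,\dots,X_{m-d}$ in a bounded-outdegree orientation; their union is itself $m$-nearly $2$-independent and hence has size at most $b$, so the smallest layer $X_i$ together with its out-neighbours gives a set $D$ of size at most $\tfrac{(d+1)b}{m-d}$ dominating everything outside $B\cup X\cup D$. The essential point your proposal misses is that the size of $D$ is controlled \emph{relative to $b$} (the extremal nearly-$2$-independent set), which is what makes the ratio $|D|/|\psi^{-1}(\mathtt{v})|$ small after choosing $m$ large; the $\Omega(R^2)$-degree vertices then appear in the cascade automatically rather than needing to be located in advance.

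There is a second, independent gap. Assigning ``a distinct common $G$-neighbour $w_{uv}$'' to each edge of $F$ does not work as stated: a single witness $w$ may be the only common neighbour of many pairs in $V^\star$, and since an \texttt{e}-vertex of a cascade has exactly two \texttt{v}-neighbours, $w$ can serve only one such pair. Discarding the other pairs deletes edges of $F^\star$, and deleting edges can increase the independence number of the foundation --- precisely the quantity you need to keep at $|V^\star|/\Omega(R)$. The paper handles this with two dedicated tools: Lemma~\ref{lemma-noincr} (deleting $O(|V(F)|)$ edges degrades $\alpha$ only by a square-root factor) and the probabilistic Lemma~\ref{lemma-mksim} together with Corollary~\ref{cor-relaxcasc} (choosing two \texttt{v}-neighbours per witness at random yields, with positive probability, a genuine cascade whose foundation still has small independence number, at the cost of a $\log$ factor in the slope). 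Without substitutes for Lemma~\ref{lemma-outdom} and for this $m$-cascade-to-cascade conversion, the proposal does not close.
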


Of course, Theorem~\ref{thm-main} can be used to reprove (in the qualitative sense, i.e., without providing
exact upper bounds) all previous results concerning
boundedness of the domination-to-2-independence ratio in specific monotone classes with bounded average degree.
For this, it is useful to know that the set of vertices of color \texttt{e} in a cascade of large slope must be large.
This easily follows from a standard consequence of Turán's theorem.
\begin{lemma}\label{lemma-alpha}
The average degree of every graph $F$ is at least $\tfrac{|V(F)|}{\alpha(F)}-1$. Equivalently,
if $F$ has average degree at most $t$, then $\alpha(F)\ge\tfrac{|V(F)|}{t+1}$.
\end{lemma}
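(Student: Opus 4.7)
The plan is to derive this from the Caro–Wei bound, which is the standard strengthening of Turán's theorem: for every graph $F$,
$$\alpha(F)\ge \sum_{v\in V(F)} \frac{1}{d(v)+1},$$
where $d(v)$ denotes the degree of $v$ in $F$. The quickest way to establish this bound is via a random ordering argument: pick a uniformly random permutation of $V(F)$, let $I$ consist of those vertices that come before all of their neighbors, and observe that $I$ is independent while each vertex $v$ lies in $I$ with probability exactly $1/(d(v)+1)$, so the expected size of $I$ is $\sum_v 1/(d(v)+1)$.

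Once the Caro–Wei bound is in hand, I would apply Jensen's inequality to the convex function $x\mapsto 1/(x+1)$ on $[0,\infty)$: writing $n=|V(F)|$ and $\bar d = \tfrac{1}{n}\sum_v d(v)$ for the average degree, convexity yields
$$\sum_{v\in V(F)} \frac{1}{d(v)+1} \ge \frac{n}{\bar d+1}.$$
Combining this with Caro–Wei gives $\alpha(F)\ge n/(\bar d+1)$, which rearranges to $\bar d\ge n/\alpha(F)-1$, proving both forms of the lemma simultaneously.

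There is essentially no obstacle here; the only choice to make is whether to invoke Caro–Wei as a cited fact or include the one-line probabilistic proof. Since the paper treats Lemma~\ref{lemma-alpha} as a standard consequence of Turán's theorem, I would opt for the shortest presentation: state Caro–Wei with a one-sentence proof sketch (random permutation), then apply Jensen. An alternative, avoiding Caro–Wei entirely, is a direct greedy argument — repeatedly remove a vertex $v$ of minimum degree together with $N[v]$, adding $v$ to the independent set — but controlling the average degree through the process is marginally more delicate than the Jensen step above, so I would stick with the random-permutation route.
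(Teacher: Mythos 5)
Your proof is correct: Caro--Wei plus Jensen's inequality applied to the convex function $x\mapsto 1/(x+1)$ gives exactly $\alpha(F)\ge |V(F)|/(\bar d+1)$, which is both forms of the lemma. The paper offers no proof of its own, citing the statement as a standard consequence of Tur\'an's theorem, and your argument is precisely the standard derivation of that consequence, so there is nothing to add.
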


This lemma applied to the foundation of a cascade gives the following bound.

\begin{corollary}\label{cor-elarge}
The foundation of a cascade $(H,\psi)$ of slope $s\ge 1$ has average degree at least $s-1$, and consequently
$$|\psi^{-1}(\mathtt{e})| \ge \frac{s-1}{2}|\psi^{-1}(\mathtt{v})|\ge \frac{s-1}{4}(|\psi^{-1}(\mathtt{v})|+s|\psi^{-1}(\mathtt{d})|)\ge \frac{s-1}{4}|V(H)\setminus \psi^{-1}(\mathtt{e})|.$$
\end{corollary}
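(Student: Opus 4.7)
The plan is to derive the average-degree bound directly from Lemma~\ref{lemma-alpha} applied to the foundation $F$, and then unfold it into the stated chain of inequalities. Observe that $|V(F)| = |\psi^{-1}(\mathtt{v})|$ by definition of the foundation, and the slope condition gives $\alpha(F) \le |\psi^{-1}(\mathtt{v})|/s$. Substituting these into Lemma~\ref{lemma-alpha} yields
$$
\text{avg.\ degree of } F \;\ge\; \frac{|V(F)|}{\alpha(F)} - 1 \;\ge\; s - 1,
$$
and hence $|E(F)| \ge \tfrac{s-1}{2}|\psi^{-1}(\mathtt{v})|$.

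The key structural observation I would then use is that $|\psi^{-1}(\mathtt{e})| \ge |E(F)|$. Indeed, every vertex of color~$\mathtt{e}$ has exactly two neighbors of color~$\mathtt{v}$, so it determines an unordered pair of $\mathtt{v}$-vertices; by the definition of~$F$, this pair is an edge of~$F$, and conversely every edge of~$F$ arises in this way. This gives a surjection from $\psi^{-1}(\mathtt{e})$ onto $E(F)$, yielding the first inequality $|\psi^{-1}(\mathtt{e})| \ge \tfrac{s-1}{2}|\psi^{-1}(\mathtt{v})|$.

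The remaining two inequalities are then purely arithmetic manipulations of the slope condition. For the second, the slope condition gives $s|\psi^{-1}(\mathtt{d})| \le |\psi^{-1}(\mathtt{v})|$, so $|\psi^{-1}(\mathtt{v})| + s|\psi^{-1}(\mathtt{d})| \le 2|\psi^{-1}(\mathtt{v})|$ and dividing by~$2$ (after multiplying by $(s-1)/2$) produces the middle inequality. For the third, since $s \ge 1$ we have $s|\psi^{-1}(\mathtt{d})| \ge |\psi^{-1}(\mathtt{d})|$, so $|\psi^{-1}(\mathtt{v})| + s|\psi^{-1}(\mathtt{d})| \ge |\psi^{-1}(\mathtt{v})| + |\psi^{-1}(\mathtt{d})| = |V(H) \setminus \psi^{-1}(\mathtt{e})|$, finishing the chain.

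There is no real obstacle here: the only point that requires care is verifying that multiple parallel $\mathtt{e}$-vertices with the same pair of $\mathtt{v}$-neighbors are handled correctly, and this only makes the inequality $|\psi^{-1}(\mathtt{e})| \ge |E(F)|$ easier rather than harder, since the map from $\mathtt{e}$-vertices to edges of~$F$ need only be surjective, not injective.
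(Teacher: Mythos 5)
Your proof is correct and is exactly the argument the paper intends: the corollary is stated as an immediate consequence of Lemma~\ref{lemma-alpha} applied to the foundation, combined with the surjection from $\psi^{-1}(\mathtt{e})$ onto $E(F)$ and the slope inequalities $\alpha(F)\le|\psi^{-1}(\mathtt{v})|/s$ and $|\psi^{-1}(\mathtt{d})|\le|\psi^{-1}(\mathtt{v})|/s$. Your remark that surjectivity (not injectivity) of the map from $\mathtt{e}$-vertices to edges of $F$ is all that is needed is a correct and worthwhile point of care.
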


For a cascade $(H,\psi)$, recall that $H-\psi^{-1}(\mathtt{d})$ is the $1$-subdivision of the foundation of the cascade.
The $1$-subdivision of a graph $F$ of average degree $t$ has weak $2$-coloring number $\Omega(t)$, see~\cite{subdivchar},
and thus every cascade appearing in a graph of weak $2$-coloring number at most $c$ has slope $O(c)$.
Moreover, the weak $2$-coloring number of a graph is an upper bound on the average degree of all subgraphs of this graph,
and consequently Theorem~\ref{thm-main} implies a weaker form of the main result of~\cite{apxdomin}.
\begin{corollary}\label{cor-arrang}
Any class of graphs with bounded weak $2$-coloring number has bounded domination-to-2-independence ratio.
\end{corollary}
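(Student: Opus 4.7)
The plan is to derive Corollary~\ref{cor-arrang} by verifying the two hypotheses of Theorem~\ref{thm-main} for the given class. First I would note that the weak $2$-coloring number is itself a subgraph-monotone parameter, so we may replace $\GG$ by its subgraph-closure without increasing the bound $c$; hence we may assume $\GG$ is monotone. The observation recorded just before the corollary states that the weak $2$-coloring number dominates the average degree of every subgraph, so $\GG$ has bounded average degree. This handles one of the two hypotheses of Theorem~\ref{thm-main}.

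The substantive step is to bound the slopes of cascades contained in $\GG$. Let $(H,\psi)$ be such a cascade, of slope $s\ge 1$, with foundation $F$. By Corollary~\ref{cor-elarge}, the average degree of $F$ is at least $s-1$. The graph $H-\psi^{-1}(\mathtt{d})$ is by definition the $1$-subdivision of $F$, and it is a subgraph of $H\in\GG$, so by monotonicity its weak $2$-coloring number is at most $c$. Applying the result of~\cite{subdivchar} cited in the paragraph preceding the corollary, which gives a lower bound of $\Omega(\text{avg.\ deg.}(F))$ for the weak $2$-coloring number of the $1$-subdivision of $F$, we conclude that $s-1 = O(c)$; that is, $s$ is bounded by a constant depending only on $c$.

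Combining these two facts, Theorem~\ref{thm-main} immediately yields that $\GG$ has bounded domination-to-2-independence ratio. I do not expect any real obstacle beyond the two small pieces of bookkeeping already flagged, namely passing to the monotone closure at the start, and tracking the implicit constant in the lower bound from~\cite{subdivchar}; the heavy lifting is performed by Theorem~\ref{thm-main}, and the present corollary is essentially the packaging of its hypothesis ``no cascades of unbounded slope'' in terms of the weak $2$-coloring number via the characterization of $1$-subdivisions.
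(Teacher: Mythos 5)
Your proposal is correct and follows exactly the argument the paper gives (in the paragraph preceding the corollary): bounded average degree of all subgraphs comes from the weak $2$-coloring number, and cascades of slope $s$ are excluded because Corollary~\ref{cor-elarge} forces the foundation to have average degree at least $s-1$, whence its $1$-subdivision --- a subgraph of the cascade --- has weak $2$-coloring number $\Omega(s-1)$ by~\cite{subdivchar}, so $s=O(c)$ and Theorem~\ref{thm-main} applies. The remark about passing to the subgraph-closure is a reasonable piece of bookkeeping that the paper leaves implicit.
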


Moreover, Corollary~\ref{cor-elarge} implies that the average degree of cascades of large slope is close to $6$,
which has the following consequence (the \emph{maximum average degree} of a graph is the maximum of the average degrees of its subgraphs).
\begin{corollary}\label{cor-avg}
The class~$\GG_d$ of graphs of maximum average degree at most $d$ has bounded domination-to-2-independence ratio when $d<6$,
and does not have domination tied to 2-independence when $d\ge 6$.
\end{corollary}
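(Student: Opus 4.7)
My plan is to deduce both directions from the tools already established in the paper. The class $\GG_d$ is monotone and (by assumption of max-average-degree at most $d$) has bounded average degree, so for the positive direction I can invoke Theorem \ref{thm-main} and only need to bound the slope of any cascade contained in $\GG_d$. The key observation is structural: in a cascade $(H,\psi)$, every edge has exactly one endpoint of color \texttt{e}, and each such endpoint has degree $3$, so
\[
  |E(H)| = 3\,|\psi^{-1}(\mathtt{e})|.
\]
For a cascade of slope $s\ge 1$, Corollary \ref{cor-elarge} yields $|V(H)\setminus \psi^{-1}(\mathtt{e})| \le \tfrac{4}{s-1}|\psi^{-1}(\mathtt{e})|$, hence $|V(H)| \le \tfrac{s+3}{s-1}|\psi^{-1}(\mathtt{e})|$. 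A direct computation then gives the average degree
\[
  \bar d(H) = \frac{6\,|\psi^{-1}(\mathtt{e})|}{|V(H)|} \ge \frac{6(s-1)}{s+3},
\]
which tends to $6$ as $s\to\infty$. Combining $\bar d(H)\le d$ with this inequality gives $s\le \tfrac{3d+6}{6-d}$ whenever $d<6$, so all cascades in $\GG_d$ have slope bounded by $\max\bigl(1,\tfrac{3d+6}{6-d}\bigr)$. Theorem \ref{thm-main} then yields bounded domination-to-2-independence ratio.

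For the negative direction ($d\ge 6$), I would invoke the construction cited in the introduction from \cite{apxdomin}: bipartite $3$-degenerate graphs with $2$-independence number $2$ but unbounded domination number. Any $3$-degenerate graph has maximum average degree at most $6$, so these graphs all lie in $\GG_6 \subseteq \GG_d$ and witness that $\GG_d$ does not even have domination tied to $2$-independence.

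There is no real obstacle; the proof is essentially routine given Theorem \ref{thm-main} and Corollary \ref{cor-elarge}. The only conceptual point worth highlighting is why the threshold is exactly $d=6$: as the slope grows, Corollary \ref{cor-elarge} forces a cascade to consist mostly of degree-$3$ vertices of color \texttt{e}, and since every edge is incident to such a vertex, the average degree of the cascade is forced toward $6$. This matches the well-known fact that the $1$-subdivision of a dense graph has average degree approaching $6$, which is already the structural reason the $3$-degenerate construction of \cite{apxdomin} is sharp.
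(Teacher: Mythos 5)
Your proposal is correct and follows essentially the same route as the paper: the positive direction is the identical average-degree computation via Corollary~\ref{cor-elarge} (your bound $s\le\tfrac{3d+6}{6-d}$ equals the paper's $\tfrac{24}{6-d}-3$), and the negative direction uses the same construction — the paper simply writes out the graph $K'_n$ (the $1$-subdivision of $K_n$ with an apex dominating the subdivision vertices) explicitly rather than citing it from \cite{apxdomin}.
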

\begin{proof}
By Corollary~\ref{cor-elarge}, if a cascade $(H,\psi)$ has slope $s>1$, then $H$ has average degree
$$\frac{2|E(H)|}{|V(H)|}=\frac{6|\psi^{-1}(\mathtt{e})|}{|\psi^{-1}(\mathtt{e})|+|V(H)\setminus \psi^{-1}(\mathtt{e})|}
\ge \frac{6|\psi^{-1}(\mathtt{e})|}{|\psi^{-1}(\mathtt{e})|+\frac{4}{s-1}|\psi^{-1}(\mathtt{e})|}=6\cdot \frac{s-1}{s+3}.$$
Thus, if $d<6$ and $H\in \GG_d$, then $s\le \tfrac{24}{6-d}-3$, and thus $\GG_d$ has bounded domination-to-2-independence ratio
by Theorem~\ref{thm-main}.

On the other hand, suppose that $d\ge 6$.
For any integer $n\ge 4$, consider the graph $K'_n$ obtained from the $1$-subdivision of the clique $K_n$ by adding a vertex
adjacent to all vertices of degree two (by coloring the vertices appropriately, the graph~$K'_n$ can be turned into a cascade
with foundation $K_n$ and exactly one vertex of color~\texttt{d}).  Note that the graph $K'_n$ is $3$-degenerate, and thus
its maximum average degree is less than~6.  Consequently, $\{K'_n:n\ge 4\}\subset \GG_d$. 
Moreover, we have $\gamma(K'_n)\ge n/2$ and $\alpha_2(K'_n)=2$,
and thus there cannot exist any function $h$ such that $\gamma(G)\le h(\alpha_2(G))$ for every $G\in\GG_d$.
\end{proof}
Note that $2$-degenerate graphs have maximum average degree less than $4$,
and thus Corollary~\ref{cor-avg} strengthens one of the main results of~\cite{bonamy2025graph}.

Our work does not resolve Problem~\ref{prob-tied}, since there are sparse monotone graph classes
that have domination tied to 2-independence, but unbounded domination-to-2-independence ratio.
As an example, for any integer $k\ge 3$ and positive real number $d$, let $\CC_{k,d}$ be the class of graphs of maximum average degree at most $d$
that do not contain the $1$-subdivision of $K_k$ as a subgraph.
\begin{theorem}\label{thm-vc2}
For any integer $k\ge 3$ and any positive real number $d$, the class~$\CC_{k,d}$ has domination tied to 2-independence.
\end{theorem}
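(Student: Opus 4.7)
The plan is to bound $\gamma(G)$ polynomially in $\alpha_2(G)$ on $\CC_{k,d}$ via two main ingredients: a VC-dimension bound on the closed-neighborhood set system, and a structural/Ramsey argument using cascades. My target is a bound of the form $\gamma(G)\le O(\alpha_2(G)^{k-1})$.

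\textbf{Step 1: VC-dimension of $\mathcal{N}(G)$.} Let $\mathcal{N}(G):=\{N[v]:v\in V(G)\}$. I would first prove that $\mathcal{N}(G)$ has VC-dimension at most $(d+1)(k-1)$. Suppose a set $S\subseteq V(G)$ of size $m$ is shattered. The subgraph $G[S]$ has average degree at most $d$, so by Lemma~\ref{lemma-alpha} there is an independent set $S'\subseteq S$ with $|S'|\ge m/(d+1)$. Shattering passes to subsets, so $S'$ is shattered. Independence of $S'$ guarantees that for every pair $\{s_i,s_j\}\subseteq S'$ the witness $w_{ij}\in V(G)$ with $N[w_{ij}]\cap S'=\{s_i,s_j\}$ lies outside $S'$ (being in $S'$ would force an edge within $S'$), and the witnesses for distinct pairs are pairwise distinct. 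Hence $S'\cup\{w_{ij}:i<j\}$ realises a $1$-subdivision of $K_{|S'|}$ in $G$. Since $G\in\CC_{k,d}$ excludes this for $|S'|\ge k$, we get $|S'|\le k-1$ and $m\le (d+1)(k-1)$.

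\textbf{Step 2: Cascade plus Ramsey.} Suppose $\gamma(G)\ge T\cdot\alpha_2(G)$ for some large $T$. By Lemma~\ref{lemma-near2ind}, also $\gamma^*(G)=\alpha_2^*(G)\ge T\cdot\alpha_2(G)/C(d)$ for a constant $C(d)$. I would then adapt the cascade-finding argument underlying Theorem~\ref{thm-main} to produce a cascade $(H,\psi)$ appearing in $G$ of slope $s=\Omega(T)$, chosen so that any independent set in the foundation $F$ lifts to a $2$-independent set in $G$. Because $G$ excludes the $1$-subdivision of $K_k$ as a subgraph, $F$ contains no $K_k$ (otherwise $\psi^{-1}(\mathtt{v})\cup\psi^{-1}(\mathtt{e})$ embeds such a $1$-subdivision into $H\subseteq G$). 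The Erd\H{o}s--Szekeres Ramsey bound for $K_k$-free graphs then gives $\alpha(F)\ge c_k|V(F)|^{1/(k-1)}$. Combined with $\alpha(F)\le |V(F)|/s$ (from the slope), this yields $s\le |V(F)|^{(k-2)/(k-1)}/c_k$. The lifting property gives $\alpha_2(G)\ge \alpha(F)\ge c_k|V(F)|^{1/(k-1)}$, hence $|V(F)|\le (\alpha_2(G)/c_k)^{k-1}$ and $s=O(\alpha_2(G)^{k-2})$. Since $s=\Omega(T)$, this gives $T\le O(\alpha_2(G)^{k-2})$, i.e., $\gamma(G)\le O(\alpha_2(G)^{k-1})$.

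\textbf{Main obstacle.} The crux is Step~2, and specifically the ``lifting'' property: ensuring that the cascade can be chosen so that $F$-independent sets of $\mathtt{v}$-vertices really are $2$-independent in $G$. The statement of Theorem~\ref{thm-main} only guarantees cascades of large slope, with no lifting guarantee, so we would need to inspect (and possibly strengthen) its proof to ensure the $\mathtt{v}$-vertices are pairwise at distance more than~$2$ in $G$ except through their $F$-edges. A tempting alternative route would bound $\alpha_2^*(G)$ directly in terms of $\alpha_2(G)$ using the VC-dimension bound together with the bounded average degree; however, bounded VC-dimension alone is insufficient (the incidence graph of a projective plane has matching number~$1$, VC-dimension~$2$, yet arbitrarily large fractional matching), so the bounded-average-degree hypothesis must play a substantive role there---perhaps via an averaging argument over closed neighborhoods of small size, combined with Matou\v{s}ek's fractional Helly theorem for set systems of bounded VC-dimension.
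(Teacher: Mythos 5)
Your proposal is not complete: the gap you flag yourself in the ``Main obstacle'' paragraph is real and is fatal to Step~2 as written. The cascades produced by the machinery of Section~\ref{sec-main} do not have the lifting property you need. In Lemma~\ref{lemma-2ind-or-steep} the foundation is $F'[S_{\mathtt{v}}]$, where adjacency records only common neighbors lying in $V(G)\setminus(B\cup X\cup D)$, and Corollary~\ref{cor-relaxcasc} then deletes further edges at random; so an independent set in the foundation may consist of vertices that are adjacent in $G$ or share common neighbors in $B\cup X\cup D$, and the inequality $\alpha_2(G)\ge\alpha(F)$ has no justification. Without it, the chain $s\le|V(F)|^{(k-2)/(k-1)}/c_k$ and $|V(F)|\le(\alpha_2(G)/c_k)^{k-1}$ yields no bound on $\gamma(G)$. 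There is a second problem hiding nearby: for the \emph{unsparsified} auxiliary graph (pairs of $B$ with a common neighbor), $K_k$-freeness is simply false---a star $K_{1,n}$ already makes its $n$ leaves pairwise ``adjacent'' via a single witness, with no $1$-subdivision of $K_3$ in sight. The deduction ``$F$ contains no $K_k$'' is only valid when distinct foundation edges are guaranteed distinct witnesses, which is exactly what the sparsification buys you and exactly what destroys lifting. Your Step~1 is correct but orphaned: the VC-dimension bound is never used in Step~2, and the fractional-Helly alternative you sketch is, as you note, not an argument.

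Ironically, Step~1 contains precisely the mechanism the paper uses, just aimed at the wrong target. The paper's proof avoids cascades and VC dimension entirely. It extracts from $G$ a set $A$ of size $\Omega(\gamma(G)/d^2)$ that is both independent in $G$ and $(2d'+1)$-nearly $2$-independent (via Lemmas~\ref{lemma-outdeg}, \ref{lemma-near2ind}, and \ref{lemma-alpha}). Each pair of $A$ with a common neighbor is assigned to one such neighbor; near-$2$-independence means each witness receives at most $\binom{2d'+1}{2}$ pairs, so these pairs can be split into $\binom{2d'+1}{2}$ classes in which witnesses are pairwise distinct. Within each class a monochromatic $K_k$ would give a $1$-subdivision of $K_k$ in $G$ (your distinct-witnesses argument from Step~1), which is excluded; pairs with no common neighbor get color~$0$. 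Multicolor Ramsey then forces a large clique in color~$0$, i.e., a large $2$-independent set, bounding $\gamma(G)$ by a Ramsey-type function of $\alpha_2(G)$. To repair your write-up you should replace the cascade detour by this direct coloring of an independent nearly-$2$-independent set; note also that your claimed bound $O(\alpha_2(G)^{k-1})$ is stronger than what the multicolor Ramsey argument delivers and would need separate justification.
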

It is easy to see that every cascade with triangle-free foundation is contained in $\CC_{k,d}$
when $k\ge 5$ and $d\ge 6$.  Since such cascades can have arbitrarily large slope,
the class $\CC_{k,d}$ does not have bounded domination-to-2-independence ratio.
Thus, Problem~\ref{prob-tied} for sparse monotone graph classes remains as a natural open question.

Can our results be strengthened?  It would be more satisfying to obtain the answer for hereditary graph classes
(i.e., classes closed on induced subgraphs) rather than monotone ones, but in our argument, the subgraphs that
give rise to cascades can contain edges between vertices of colors \texttt{v} and \texttt{d} and we were not
able to deal with this issue.

The restriction to graphs of bounded maximum average degree is quite important
for us: As mentioned above, the domination number of any such graph is most linear in its fractional domination number,
which allows us to consider only the relationship between the $2$-independence number and its fractional relaxation.
As a consequence, our understanding of domination-to-2-independence ratio of dense graph classes remains rather limited.

Finally, let us remark that even if a cascade of large slope appears in a graph~$G$ as an induced subgraph, this does not imply that
the ratio $\gamma(G)/\alpha_2(G)$ is large; e.g., the graph~$G$ could contain a universal vertex outside of this cascade, forcing $\gamma(G)=\alpha_2(G)=1$.
In other words, we obtain a characterization for graph classes, not for individual graphs.   A strong dichotomy
of form ``either $\gamma(G)/\alpha_2(G)$ is small, or the graph $G$ contains a nice substructure forcing $\gamma(G)/\alpha_2(G)$ to be large''
would of course be preferable, but seems much harder to obtain even with substantial additional restrictions.

The rest of the paper is devoted to the proof of our main result, Theorem~\ref{thm-main} (in Section~\ref{sec-main}),
and to the proof of Theorem~\ref{thm-vc2} (in Section~\ref{sec-vc2}).  Before that, we need to gather a few auxiliary results.

\section{Preliminaries}

For the proof of Theorem~\ref{thm-main}, it will be convenient to work with a relaxed version of cascades.
For an integer~$m\ge 2$, an \emph{$m$-cascade} is a pair $(H,\psi)$, where $\psi$ is a proper
coloring of a graph $H$ by colors \texttt{v}, \texttt{e}, and \texttt{d}
such that every vertex of color \texttt{e} has at most $m$ neighbors
of color \texttt{v} and exactly one neighbor of color \texttt{d},
and $H$ has no other edges.   We define the foundation of an $m$-cascade in
the same way as for cascades.  For real numbers $s_1,s_2\ge 1$, we say that
an $m$-cascade $(H,\psi)$ with foundation $F$ has \emph{slope at least $(s_1,s_2)$}
if
$$\alpha(F)\le \tfrac{|\psi^{-1}(\mathtt{v})|}{s_1}\text{ and }|\psi^{-1}(\mathtt{d})|\le \tfrac{|\psi^{-1}(\mathtt{v})|}{s_2}.$$
Let us now argue that these relaxed cascades can be turned into normal ones;
basically, we show that simply choosing two edges from each vertex of color $\mathtt{e}$
to $\psi^{-1}(\mathtt{v})$ at random results with non-zero probability in a cascade of large slope.

To formulate the argument precisely, let us introduce the following definitions.
Let $E$ and $V$ be finite sets and let $S=\{S_e:e\in E\}$ be a system of (not necessarily pairwise different) subsets
of $V$.  We let $G_{V,E}$ be the graph with vertex set $V$ and with distinct vertices $u,v\in V$ adjacent if
and only if there exists $e\in E$ with $\{u,v\}\in S_e$.  We say that a graph $G'\subseteq G_{V,E}$
with vertex set $V$ is \emph{compatible} with the system $S$ if there exists an injective function $f:E(G')\to E$
such that $\{u,v\}\subseteq S_{f(uv)}$ holds for every edge $uv\in E(G')$.

\begin{lemma}\label{lemma-mksim}
Let $m\ge 2$ be an integer, let $E$ and $V$ be finite sets and let $n=|V|$.
Let $S=\{S_e:e\in E\}$ be a system of (not necessarily pairwise different) subsets
of~$V$, each of size at most~$m$.  Let $c\ge 1$ be a real number and let $b=2\binom{m}{2}^2\log(ec)+2$.
If $\alpha(G_{V,E})\le \tfrac{n}{bc}$, then there exists a graph $G'\subseteq G_{V,E}$ compatible with $S$
such that $\alpha(G')\le \tfrac{n}{c}$.
\end{lemma}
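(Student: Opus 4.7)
The approach is probabilistic. For each $e \in E$ with $|S_e| \ge 2$, independently pick a uniformly random $2$-element subset $P_e \subseteq S_e$, and let $G'$ be the graph on vertex set $V$ with edge set $\{P_e : e \in E,\ |S_e| \ge 2\}$. For each edge of $G'$ select one $e$ with $P_e$ equal to that edge; this defines an injective $f : E(G') \to E$ witnessing that $G' \subseteq G_{V,E}$ is compatible with $S$.

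It then suffices to show that with positive probability no $I \subseteq V$ with $|I| > n/c$ is independent in $G'$. For a fixed such $I$, the events $\{P_e \not\subseteq I\}$ are mutually independent across $e$, and $\Pr[P_e \subseteq I] = \binom{|S_e\cap I|}{2}/\binom{|S_e|}{2} \ge \binom{|S_e\cap I|}{2}/\binom{m}{2}$ since $|S_e| \le m$. Applying $1-x \le e^{-x}$ yields
\begin{equation*}
\Pr[I \text{ is independent in } G'] \le \exp\!\left(-\frac{1}{\binom{m}{2}}\sum_{e \in E} \binom{|S_e \cap I|}{2}\right).
\end{equation*}

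The key estimate is $\sum_e \binom{|S_e \cap I|}{2} \ge |E(G_{V,E}[I])|$, which holds by a double count of the pairs in $I$ covered by the subsets $S_e$. Since $\alpha(G_{V,E}[I]) \le \alpha(G_{V,E}) \le n/(bc)$, Lemma~\ref{lemma-alpha} applied to $G_{V,E}[I]$ gives $|E(G_{V,E}[I])| \ge \tfrac12 |I|\bigl(|I|bc/n - 1\bigr) > \tfrac12 |I|(b-1)$, where the last step uses $|I| > n/c$. Combined with the bound $\binom{n}{k} \le (en/k)^k \le (ec)^k$ valid for $k = |I| > n/c$, the union bound turns into a geometric series
\begin{equation*}
\sum_{k > n/c} \exp\!\left(k\log(ec) - \frac{k(b-1)}{2\binom{m}{2}}\right),
\end{equation*}
and an arithmetic check shows this is $<1$ for the stated $b$ as soon as $m \ge 3$ (where the factor $\binom{m}{2}^2$ in $b$ gives plenty of slack to absorb the $\log(ec)$ term and the constant losses from the geometric sum). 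The boundary case $m=2$ is trivial: there is only one choice $P_e = S_e$, so $G' = G_{V,E}$, and the conclusion follows from $\alpha(G_{V,E}) \le n/(bc) \le n/c$.

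\textbf{Main obstacle.} The conceptual content, bounding $\Pr[I \text{ independent in } G']$ by exploiting independence of the pair-choices across $e$, is standard; the only delicate point is verifying that the explicit value $b = 2\binom{m}{2}^2 \log(ec) + 2$ is large enough for the geometric series to fall below~$1$. This reduces to elementary inequalities, but for small $m$ the margin is slim, so the double-counting lower bound on $\sum_e \binom{|S_e\cap I|}{2}$ must be used carefully (rather than the weaker bound $|E(G_{V,E}[I])|/\binom{m}{2}^2$ that one gets by replacing each $\binom{|S_e\cap I|}{2}$ by $1$).
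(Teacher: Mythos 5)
Your proof is correct and follows essentially the same route as the paper: the identical random choice of a pair from each $S_e$, a first-moment/union bound over candidate independent sets, and Lemma~\ref{lemma-alpha} to lower-bound the number of edges of $G_{V,E}$ inside such a set. The only (harmless) differences are that the paper restricts attention to sets of size exactly $\lceil n/c\rceil$, which avoids the geometric series and the separate $m=2$ case, and uses the slightly cruder per-set probability bound $\exp\bigl(-\tfrac{(b-1)a}{2\binom{m}{2}^2}\bigr)$, which the chosen value of $b$ is calibrated to.
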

\begin{proof}
Let $G'$ be the random graph obtained as follows.
For each $e\in E$ such that $|S_e|\ge 2$, we choose a subset $\{u,v\}\subseteq S_e$ of size two independently uniformly at random
and join $u$ with $v$ by an edge.  Clearly, this ensures that $G'$ is compatible with $S$.

Let us now consider any set $A\subseteq V$ of size $a=\lceil n/c\rceil$.
Since
$$\alpha(G_{V,E}[A])\le \alpha(G_{V,E})\le \frac{n}{bc}\le \frac{a}{b},$$
Lemma~\ref{lemma-alpha} implies that the graph $G_{V,E}[A]$ has average degree at least $b-1$,
and thus $G_{V,A}[A]$ has at least $\tfrac{b-1}{2}a$ edges.  Consequently,
there exist at least
$$\frac{\tfrac{b-1}{2}a}{\binom{m}{2}}>\binom{m}{2}\log(ec)\cdot a$$ elements $e\in E$
such that $|S_e\cap A|\ge 2$.  Each such element $e$ independently contributes an edge
of $G'$ with both ends in $A$ with probability at least $\binom{m}{2}^{-1}$.
Thus, the probability that $A$ is an independent set in $G'$ is less than
$$\Bigl(1-\binom{m}{2}^{-1}\Bigr)^{\binom{m}{2}\log(ec)\cdot a}\le e^{-\log(ec)\cdot a}=(ec)^{-a}.$$
The number of subsets of $V$ of size $a$ is at most
$$\binom{n}{a}\le \Bigl(\frac{en}{\lceil n/c\rceil}\Bigr)^a\le (ec)^a.$$
Consequently, the expected number of independent sets of size $a$ in $G'$ is less than $1$,
and thus $\alpha(G')\le a-1\le n/c$ holds with non-zero probability.
\end{proof}

By applying this lemma to the system of neighborhoods of vertices of color \texttt{e}, we obtain the
desired result on turning relaxed cascades into normal ones.

\begin{corollary}\label{cor-relaxcasc}
For every real number $c\ge 1$ and integer $m\ge 2$, if an $m$-cascade $(H,\psi)$ has slope at least $(m^4c\log(ec),c)$,
then a cascade of slope at least $c$ appears in $H$.
\end{corollary}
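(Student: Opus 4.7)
The plan is to apply Lemma~\ref{lemma-mksim} directly to the system of $\mathtt{v}$-neighborhoods of the $\mathtt{e}$-vertices of the given $m$-cascade $(H,\psi)$, and then graft the surviving edges back onto the $\mathtt{d}$-part of $H$ to obtain the desired cascade as a subgraph of $H$.

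Concretely, I would set $V=\psi^{-1}(\mathtt{v})$, $E=\psi^{-1}(\mathtt{e})$, and $S_e=N_H(e)\cap V$ for each $e\in E$, so that $|S_e|\le m$ by the definition of an $m$-cascade. The key observation is that two distinct vertices $u,v\in V$ are adjacent in $G_{V,E}$ precisely when they share a common $\mathtt{e}$-neighbor in $H$, so $G_{V,E}$ coincides with the foundation $F$ of the $m$-cascade. The first slope hypothesis therefore reads $\alpha(G_{V,E})\le n/(m^4c\log(ec))$ with $n=|V|$. A short arithmetic check using $\binom{m}{2}^2\le m^4/4$, $m\ge 2$, and $\log(ec)\ge 1$ shows that $m^4c\log(ec)\ge bc$ for $b=2\binom{m}{2}^2\log(ec)+2$, so the hypothesis of Lemma~\ref{lemma-mksim} is satisfied. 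The lemma then produces a graph $G'\subseteq F$ together with an injection $f\colon E(G')\to E$ witnessing compatibility with $S$, and satisfying $\alpha(G')\le n/c$.

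To finish, I would assemble a cascade $(H',\psi')$ that is a subgraph of $H$: take $E'=f(E(G'))$, let $D'\subseteq\psi^{-1}(\mathtt{d})$ be the set of $\mathtt{d}$-neighbors in $H$ of the vertices in $E'$, and put $V(H')=V\cup E'\cup D'$. For each edge $uv\in E(G')$, keep in $H'$ the edges from $f(uv)$ to $u$, to $v$, and to its unique $\mathtt{d}$-neighbor. Compatibility and the $m$-cascade structure guarantee every such edge already lives in $H$; injectivity of $f$ guarantees that distinct edges of $G'$ use distinct $\mathtt{e}$-vertices, so in $H'$ each $\mathtt{e}$-vertex is incident to exactly two $\mathtt{v}$-vertices and one $\mathtt{d}$-vertex. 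Hence $(H',\psi|_{V(H')})$ is a cascade appearing in $H$, and its foundation has edge set exactly $E(G')$; therefore its independence number is $\alpha(G')\le|\psi^{-1}(\mathtt{v})|/c$, while $|D'|\le|\psi^{-1}(\mathtt{d})|\le|\psi^{-1}(\mathtt{v})|/c$ by the second slope hypothesis, giving slope at least $c$.

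There is essentially no conceptual obstacle here beyond Lemma~\ref{lemma-mksim} itself, which has already been proved; the main thing to get right is the translation between the $m$-cascade data and the hypotheses of that lemma, and in particular to notice that its injectivity clause is exactly what lets each selected edge borrow a \emph{distinct} $\mathtt{e}$-vertex (together with the private $\mathtt{d}$-neighbor of that $\mathtt{e}$-vertex). The only numerical step is the inequality $m^4c\log(ec)\ge bc$, which is routine.
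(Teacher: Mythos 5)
Your proof is correct and follows essentially the same route as the paper's: both apply Lemma~\ref{lemma-mksim} to the system of $\mathtt{v}$-neighborhoods of the $\mathtt{e}$-vertices (after the same arithmetic check that $2\binom{m}{2}^2\log(ec)+2\le m^4\log(ec)$), then use the injection to prune $H$ down to a genuine cascade whose foundation is the resulting subgraph. The only cosmetic difference is that you also discard the $\mathtt{d}$-vertices not adjacent to a retained $\mathtt{e}$-vertex, while the paper keeps all of $\psi^{-1}(\mathtt{d})$; either way the bound $|\psi^{-1}(\mathtt{d})|\le|\psi^{-1}(\mathtt{v})|/c$ gives slope at least $c$.
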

\begin{proof}
Let $F$ be the foundation of $(H,\psi)$; then $\alpha(F)\le \tfrac{|\psi^{-1}(\mathtt{v})|}{m^4c\log(ec)}$.
Note that $2\binom{m}{2}^2\log(ec)+2\le m^4\log(ec)$.
Hence, Lemma~\ref{lemma-mksim} implies that there exists a spanning subgraph $F_1$ of $F$
with $\alpha(F_1)\le \tfrac{|\psi^{-1}(\mathtt{v})|}{c}$ and an injective function $f:E(F_1)\to \psi^{-1}(\mathtt{e})$
such that for every $uv\in E(F_1)$, the vertex $f(uv)$ is a common neighbor of $u$ and $v$ in $H$.

Let $H_1$ be the subgraph of $H$ obtained by deleting all vertices of color \texttt{e} not in the image of $f$
and all edges between vertices of colors \texttt{v} and \texttt{e} except for those in $\{uf(uv), vf(uv):uv\in E(F_1)\}$.  Let $\psi_1$ be the restriction of $\psi$ to $V(H_1)$.  Then $(H_1,\psi_1)$ is a cascade
with foundation $F_1$.  Moreover, since $\alpha(F_1)\le \tfrac{|\psi^{-1}(\mathtt{v})|}{c}=\tfrac{|\psi_1^{-1}(\mathtt{v})|}{c}$ and
$|\psi^{-1}_1(\mathtt{d})|=|\psi^{-1}(\mathtt{d})|\le \tfrac{|\psi^{-1}(\mathtt{v})|}{c}=\tfrac{|\psi_1^{-1}(\mathtt{v})|}{c}$,
the slope of the cascade $(H_1,\psi_1)$ is at least~$c$.
\end{proof}

We also need the following technical lemma that shows that the removal of a small number of edges
cannot substantially increase the independence number.
\begin{lemma}\label{lemma-noincr}
Let $F$ be a graph and let $a=\tfrac{\alpha(F)}{|V(F)|}$. Let $F'$ be a subgraph of $F$ and let $r\ge 1$ be a real number.
If $|E(F)\setminus E(F')|\le r|V(F)|$, then
$$\alpha(F')\le 2\sqrt{ra}\cdot |V(F)|.$$
\end{lemma}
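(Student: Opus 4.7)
My plan is to pick a maximum independent set $I$ in $F'$ and analyze the subgraph $F[I]$. Since $I$ is independent in $F'$, every edge of $F$ with both endpoints in $I$ must lie in $E(F)\setminus E(F')$; hence $|E(F[I])|\le r|V(F)|$. Writing $n=|V(F)|$ and applying Lemma~\ref{lemma-alpha} to $F[I]$, whose average degree is $2|E(F[I])|/|I|$, gives
$$\alpha(F[I])\ge \frac{|I|}{1+2|E(F[I])|/|I|}=\frac{|I|^2}{|I|+2|E(F[I])|}.$$
On the other hand $\alpha(F[I])\le \alpha(F)=an$, so combining yields the key inequality
$$|I|^2\le an\bigl(|I|+2|E(F[I])|\bigr)\le an\,|I|+2ran^2.$$

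The remainder is just solving this quadratic inequality in $|I|$ and massaging the constants. The roots give
$$|I|\le \tfrac{1}{2}\bigl(an+\sqrt{a^2n^2+8ran^2}\bigr)=\tfrac{n}{2}\bigl(a+\sqrt{a^2+8ra}\bigr).$$
Using $r\ge 1$ and $a\le 1$ (so $a^2\le a\le ra$), we have $\sqrt{a^2+8ra}\le 3\sqrt{ra}$ and $a\le \sqrt{a}\le\sqrt{ra}$, giving $|I|\le \tfrac{n}{2}(\sqrt{ra}+3\sqrt{ra})=2\sqrt{ra}\cdot n$, as desired.

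There is no real obstacle here; the only subtlety is noticing that the edges of $F$ restricted to an independent set of $F'$ are exactly among the discarded edges, which turns a global sparsity hypothesis into a local density bound on $F[I]$, after which Turán's bound (packaged as Lemma~\ref{lemma-alpha}) and the global bound $\alpha(F)=an$ combine to close the loop. The assumption $r\ge 1$ is used only to absorb the linear term $an|I|$ into the $\sqrt{ra}$ factor; without it one would get a slightly weaker $\max(a,\sqrt{ra})$-type bound.
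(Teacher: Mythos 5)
Your proof is correct and follows essentially the same route as the paper's: both observe that all edges of $F$ inside an independent set of $F'$ lie in $E(F)\setminus E(F')$, apply Lemma~\ref{lemma-alpha} to that sparse induced subgraph, and compare with $\alpha(F)=a|V(F)|$. The only difference is presentational --- you solve the resulting quadratic inequality in $|I|$ directly, whereas the paper argues by contradiction with a slightly slicker average-degree estimate --- and your constant manipulations (using $r\ge 1$ and $a\le 1$) check out.
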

\begin{proof}
Suppose for a contradiction that $F'$ contains an independent set $C$ of size more than $2\sqrt{ra}\cdot|V(F)|$.
All edges of the induced subgraph $F[C]$ belong to $E(F)\setminus E(F')$, and thus there are at most $r|V(F)|$ of them.
Consequently, $F[C]$ has average degree is at most $\tfrac{2r|V(F)|}{|C|}<\sqrt{r/a}$.  
Note that $a\le 1$, and thus $\sqrt{r/a}\ge 1$.
By Lemma~\ref{lemma-alpha}, we have
$$\alpha(F)\ge \alpha(F[C])\ge \frac{|C|}{\sqrt{r/a}+1}\ge \frac{|C|}{2\sqrt{r/a}}>a|V(F)|.$$
Since $\alpha(F)=a|V(F)|$, this is a contradiction.
\end{proof}

\section{Graphs without steep cascades}\label{sec-main}

Hall's theorem has the following well-known consequence.

\begin{lemma}\label{lemma-outdeg}
For every positive integer $d$, every graph of maximum average degree at most $2d$ has an orientation of maximum outdegree
at most $d$.
\end{lemma}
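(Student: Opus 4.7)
The plan is to apply Hall's theorem to a bipartite auxiliary graph $B$ constructed as follows. One part of $B$ will be $E(G)$; the other part will consist of $d$ ``slots'' attached to each vertex, i.e., $V(G)\times\{1,\dots,d\}$. An edge $e=uv\in E(G)$ is joined in $B$ to all $2d$ slots sitting at $u$ or $v$. A matching in $B$ that saturates $E(G)$ would assign every edge to a slot at one of its endpoints, and orienting each edge \emph{away from} the chosen endpoint would then produce an orientation in which every vertex has outdegree at most $d$, since it owns only $d$ slots.

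The remaining task is to verify Hall's condition for this bipartite graph. Given any set $S\subseteq E(G)$, I would let $U\subseteq V(G)$ be the set of vertices incident to some edge of $S$, and observe that the neighborhood of $S$ in $B$ is exactly $U\times\{1,\dots,d\}$, so $|N_B(S)|=d|U|$. Since every edge of $S$ has both endpoints in $U$, we have $S\subseteq E(G[U])$, and $G[U]$ being a subgraph of $G$ has average degree at most $2d$ by hypothesis. This yields $|S|\le |E(G[U])|\le d|U| = |N_B(S)|$, which is exactly Hall's condition.

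There is no real obstacle here: the statement is a classical consequence of Hall's theorem, and the maximum-average-degree hypothesis is tailored precisely so that the counting step works uniformly over all vertex subsets. The only point to be slightly careful about is that the bound on average degree must be invoked for the induced subgraph $G[U]$ rather than for $G$ itself, which is why the lemma is naturally phrased in terms of the \emph{maximum} average degree (a quantity that is, by definition, inherited by every subgraph).
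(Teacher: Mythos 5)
Your proof is correct and is exactly the standard Hall's-theorem argument the paper has in mind: the paper states the lemma without proof, introducing it as a ``well-known consequence'' of Hall's theorem, and your slot construction with the verification $|S|\le|E(G[U])|\le d|U|$ is the canonical way to derive it.
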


A set $S$ of vertices of a graph $G$ is \emph{$k$-nearly $2$-independent} if the closed neighborhood of every vertex of $G$
intersects $S$ in at most $k$ vertices.  Dvořák~\cite{apxlp} proved the following.

\begin{lemma}\label{lemma-near2ind}
For every positive integer $d$, every graph $G$ with an orientation of maximum outdegree at most $d$ contains
a $(2d+1)$-nearly $2$-independent set of size at least $\tfrac{1}{4d+2}\gamma(G)$.
\end{lemma}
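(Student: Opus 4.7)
The plan is to combine LP duality with a rounding argument, writing the target constant $4d+2$ as $2(2d+1)$. By the duality noted in the introduction, $\alpha_2^\star(G)=\gamma^\star(G)$; the factor $2d+1$ comes from the Bansal--Umboh-style integrality gap $\gamma(G)\le(2d+1)\gamma^\star(G)$ for graphs with an orientation of outdegree at most $d$, referenced just before the lemma. So it suffices to extract, from an optimal fractional 2-independent set $g:V(G)\to[0,1]$ with $\sum g(v)=\gamma^\star(G)$ and $\sum_{v\in N[u]}g(v)\le 1$ for every $u$, an integral $(2d+1)$-nearly 2-independent set of size at least $\gamma^\star(G)/2$.

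For the rounding I would sample $S_0$ by including each vertex $v$ independently with probability $g(v)/2$, giving $\mathbb{E}[|S_0|]=\gamma^\star(G)/2$. Let $N^+(u)$ and $N^-(u)$ denote the out- and in-neighborhoods of $u$ under the given orientation. The forward contribution $|S_0\cap(\{u\}\cup N^+(u))|$ is bounded deterministically by $|\{u\}\cup N^+(u)|\le d+1$, while the backward contribution $|S_0\cap N^-(u)|$ is a sum of independent Bernoullis of expectation at most $\tfrac12\sum_{v\in N[u]}g(v)\le \tfrac12$. An alteration step that, for each $u$ with $|S_0\cap N^-(u)|>d$, deletes the excess from $S_0\cap N^-(u)$ then enforces the cap $|S\cap N[u]|\le 2d+1$ on the resulting set $S$. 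A favorable outcome in the probability space yields the desired integer set.

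The main obstacle is to control the total expected number of deletions by a small fraction of $|S_0|$, rather than by $|V(G)|$. Reversing the order of summation gives $\sum_u|S_0\cap N^-(u)|=\sum_{v\in S_0}\deg^+(v)\le d|S_0|$, which is the wrong size; one therefore has to combine Poisson-tail estimates for each $|S_0\cap N^-(u)|$ (using that each has mean $\le \tfrac12\ll d$) with the LP constraint $\sum_{v\in N[u]}g(v)\le 1$ to bring the total expected excess down to $o(|S_0|)$. Making both the sharp threshold $2d+1$ — cleanly split as $(d+1)$ deterministic plus $d$ random-backward — and the factor-$2$ sampling loss line up precisely is the delicate point, and it may be easier in practice to run a direct combinatorial greedy on a minimum dominating set, using the orientation to bound how often each blocker can fill a closed neighborhood, rather than pure LP rounding.
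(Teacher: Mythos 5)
The paper does not actually prove this lemma --- it is imported verbatim from Dvořák~\cite{apxlp} --- so there is no in-paper argument to compare against, and your sketch has to be judged on its own. As it stands it has a genuine gap, and it is exactly the one you flag yourself: the accounting for the alteration step. The problem is not merely that it is ``delicate''; the numbers cannot work out to the stated constant. With sampling probability $g(v)/2$ you get $\mathbb{E}[|S_0|]=\tfrac12\gamma^\star(G)$, while the target is $\tfrac{1}{4d+2}\gamma(G)$. In the regime where your premise $\gamma(G)\le(2d+1)\gamma^\star(G)$ is tight, these two quantities coincide, so the rounding has \emph{zero} slack: any positive expected deletion cost already breaks the bound. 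And the deletion cost is a positive constant fraction of $\gamma^\star(G)$, not $o(|S_0|)$. Carrying out the Poisson-tail computation you allude to: with $\mu_u=\tfrac12\sum_{v\in N^-(u)}g(v)\le\tfrac12$ one gets $\Pr[|S_0\cap N^-(u)|\ge j]\le \mu_u^j/j!\le \mu_u\cdot(1/2)^{j-1}/j!$, hence the total expected excess is at most $\bigl(\sum_{j\ge d+1}(1/2)^{j-1}/j!\bigr)\sum_u\mu_u\le C_d\cdot\tfrac{d}{2}\gamma^\star(G)$ with $C_d=\sum_{j\ge d+1}(1/2)^{j-1}/j!$. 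For $d=1$ this is roughly $0.15\,\gamma^\star(G)$, so the surviving set has expected size about $0.35\,\gamma^\star(G)<\tfrac12\gamma^\star(G)$. So even granting every estimate, this scheme proves the lemma only with a strictly worse constant than $\tfrac{1}{4d+2}$; to get the stated constant one needs a construction with no sampling-plus-alteration loss (e.g.\ a deterministic greedy of the kind you mention at the end, which you do not develop).

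Two further points. First, the factorization $4d+2=2(2d+1)$ rests on the unproved premise $\gamma(G)\le(2d+1)\gamma^\star(G)$ for graphs orientable with outdegree at most $d$; the paper only cites Bansal--Umboh for ``bounded by a constant'' and never asserts this specific constant, so you would have to prove it (and if the true constant there is worse, your whole decomposition shifts). Second, be careful with the deterministic forward bound: you need $|S\cap(\{u\}\cup N^+(u))|\le d+1$ \emph{and} $|S\cap N^-(u)|\le d$ simultaneously for every $u$ after all alterations; the alteration only ever removes vertices, so this part is fine, but it means the threshold $2d+1$ is forced and cannot be relaxed to buy slack elsewhere. In short: right general flavor (LP duality plus rounding against the orientation), but the quantitative core of the proof is missing and the specific randomized rounding you propose provably falls short of the claimed constant.
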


Thus, for graphs of bounded maximum average degree, the problem of relating $\alpha_2(G)$ to $\gamma(G)$
reduces to the question of whether (for a suitably chosen $m$) a large $m$-nearly $2$-independent set $B$ can be turned into a $2$-independent one.
We need to show that if this is not possible, then an $m$-cascade of large slope appears in $G$.
Our plan is to use a large part of $B$ for the vertices of color \texttt{v} of this $m$-cascade,
and most of the rest of the vertices for those of color \texttt{e}.  For this to succeed, we need to argue
that almost all vertices of $V(G)\setminus B$ can be dominated by a set of much less than $|B|$ vertices
(which then will be used as vertices of color \texttt{d}).  This indeed is the case, as we show next.
\begin{lemma}\label{lemma-outdom}
Let $d$ and $m$ be positive integers such that $d<m$, let $G$ be a graph of maximum average degree
at most $2d$, and let $b$ be the maximum size of an $m$-nearly $2$-independent set in $G$.
For every set $B\subseteq V(G)$, there exists sets $X\subseteq V(G)\setminus B$ and $D\subseteq V(G)$ such that
$|X|\le b$, $|D|\le \tfrac{(d+1)b}{m-d}$, and every vertex of $V(G)\setminus (B\cup X\cup D)$ has a neighbor in $D$.
\end{lemma}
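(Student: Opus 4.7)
The plan is to exploit the orientation-based tools already available in the paper. First, apply Lemma~\ref{lemma-outdeg} to obtain an orientation $\vec G$ of $G$ with maximum outdegree at most~$d$ (this uses the bound $2d$ on the maximum average degree). Next, pick $X$ to be an inclusion-maximal $m$-nearly $2$-independent subset of $V(G)\setminus B$; the bound $|X|\le b$ is then automatic from the definition of~$b$. Finally, set $D:=\{w\in V(G):|N[w]\cap X|=m\}$, i.e., the vertices whose closed neighborhood saturates the $m$-nearness condition for~$X$.

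The domination property should fall out of the maximality of~$X$. Indeed, for any $v\in V(G)\setminus(B\cup X\cup D)$, the set $X\cup\{v\}$ fails to be $m$-nearly $2$-independent, so there must exist a vertex~$w$ with $|N[w]\cap(X\cup\{v\})|>m$. Since $|N[w]\cap X|\le m$, this forces $|N[w]\cap X|=m$ and $v\in N[w]$, so $w\in D$; the fact that $v\notin D$ then yields $w\ne v$, giving the edge $vw$ with $w\in D$ as required.

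To bound $|D|$, I would split it as $D=D_1\cup D_2$ with $D_1:=D\cap X$ and $D_2:=D\setminus X$, and charge in-edges of $D$-vertices against the out-degrees of~$X$. Each $v\in D_2$ has exactly $m$ neighbors in~$X$, at most~$d$ of which can be oriented out of~$v$, so at least $m-d$ edges come \emph{into} $v$ from~$X$. Each $v\in D_1$ has exactly $m-1$ neighbors in~$X$, yielding at least $m-1-d$ in-edges from within~$X$. Since the total number of edges leaving~$X$ in the orientation is at most $d|X|\le db$, summing these estimates produces the inequality
\[
(m-1-d)|D_1|+(m-d)|D_2|\le db,
\]
together with the trivial side constraint $|D_1|\le|X|\le b$. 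The target estimate $|D|\le\tfrac{(d+1)b}{m-d}$ should then pop out of the resulting two-variable linear program.

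The main technical obstacle is extracting the precise coefficient $\tfrac{(d+1)b}{m-d}$ rather than the cruder $\tfrac{mb}{m-d}$ that one would obtain by ignoring the distinction between $D_1$ and $D_2$. The point is that $D_1$ enjoys both a stronger orientation inequality (coefficient $m-1-d$ instead of $m-d$) and the free side bound $|D_1|\le b$, and both must be used simultaneously: one pushes $|D_1|$ up to~$b$ (because it has the smaller coefficient on the left-hand side) and solves for~$|D_2|$. A short case analysis depending on whether $2d+1\ge m$ or not handles the degenerate regime where the side constraint becomes non-binding and the bound $|D_1|\le\tfrac{db}{m-1-d}$ takes over; a direct comparison shows the final estimate $\tfrac{(d+1)b}{m-d}$ remains valid.
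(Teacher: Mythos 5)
Your proof is correct, and it takes a genuinely different route from the paper's. The paper also starts from an orientation of outdegree at most $d$, but then builds $X$ as a union of $m-d$ greedily chosen ``out-$2$-independent'' layers $X_1,\ldots,X_{m-d}$ (sets that are independent and whose vertices share no common out-neighbor), argues that this union is $m$-nearly $2$-independent to get $|X|\le b$, picks the smallest layer $X_i$ with $|X_i|\le b/(m-d)$, and takes $D=X_i$ together with all out-neighbors of $X_i$; domination then follows from the maximality of $X_i$ as an out-$2$-independent set. You instead take a single inclusion-maximal $m$-nearly $2$-independent subset $X$ of $V(G)\setminus B$ and let $D$ be the set of ``saturated'' vertices $w$ with $|N[w]\cap X|=m$, getting domination directly from maximality of $X$, and you use the orientation only for the counting bound on $|D|$. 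Your charging argument is sound: each $v\in D_2=D\setminus X$ receives at least $m-d$ arcs from $X$ and each $v\in D_1=D\cap X$ at least $m-1-d$ arcs from $X$, these arcs have distinct heads so there is no double counting, and comparing against the at most $d|X|\le db$ arcs leaving $X$ gives $(m-1-d)|D_1|+(m-d)|D_2|\le db$. One small simplification: the case analysis you sketch at the end is unnecessary, since substituting the constraint into the objective gives $|D_1|+|D_2|\le\frac{|D_1|+db}{m-d}$ directly (using only $m-1-d\ge 0$, which follows from $d<m$), and then $|D_1|\le|X|\le b$ yields $\frac{(d+1)b}{m-d}$ in one line. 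Your argument is arguably more direct and would serve as a drop-in replacement, since the later application (Lemma~\ref{lemma-2ind-or-steep}) uses only the three properties stated in the lemma; the paper's layered construction has the additional feature that $D$ dominates via out-neighbors, but this is not exploited elsewhere.
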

\begin{proof}
By Lemma~\ref{lemma-outdeg}, we can fix an orientation $\vec{G}$ of $G$ of maximum outdegree at most $d$.
We say that a set $A$ of vertices is \emph{out-2-independent} if it is independent in $G$
and distinct vertices of $A$ do not have any common out-neighbors in $\vec{G}$; i.e., every vertex of $\vec{G}$ has at most one in-neighbor in $A$.

Let us choose pairwise disjoint set $X_1,X_2,\ldots, X_{m-d}\subset V(G)\setminus B$
so that for $i=1,\ldots, m-d$, the set $X_i$ is an inclusionwise-maximal subset of $V(G)\setminus (B\cup X_1\cup \ldots \cup X_{i-1})$
which is out-2-independent in $\vec{G}$.  Let $X=X_1\cup \ldots \cup X_{m-d}$.  Since the sets $X_1$, \ldots, $X_{m-d}$ are
out-2-independent in $\vec{G}$, every vertex of $V(G)\setminus X$ has at most $m-d$ in-neighbors in $X$
and every vertex of $X$ has at most $m-d-1$ in-neighbors in $X$.
Since $\vec{G}$ has maximum outdegree at most $d$, it follows that the set $X$ is $m$-nearly $2$-independent in $G$,
and thus $|X|\le b$.  Therefore, we can fix an index $i\in \{1,\ldots,m-d\}$ such that $|X_i|\le \tfrac{b}{m-d}$.

Let $D$ be the set consisting of $X_i$ and of all out-neighbors of vertices of $X_i$ in~$\vec{G}$;
then $|D|\le (d+1)|X_i|\le \tfrac{(d+1)b}{m-d}$.
We claim that every vertex $v\in V(G)\setminus (B\cup X\cup D)$ has an out-neighbor in $D$.
Indeed, since $X_i$ is an inclusionwise-maximal out-2-independent subset of
$V(G)\setminus (B\cup X_1\cup \ldots \cup X_{i-1})\supseteq V(G)\setminus (B\cup X\cup D)$,
the set $X_i\cup\{v\}$ is not out-2-independent in $\vec{G}$.
If $v$ has a neighbor $u\in X_i$, then the edge $uv$ must be directed towards $u$,
as otherwise we would have $v\in D$ (and note that $u\in D$, since $X_i\subseteq D$).  If $v$ does not have any neighbors in $X_i$, then $v$ and a vertex of $X_i$ must
have a common out-neighbor, and this out-neighbor belongs to $D$.
\end{proof}

We are now ready to execute the key step towards the proof of Theorem~\ref{thm-main} that we outlined
above, i.e., using a large $m$-nearly $2$-independent set to obtain an $m$-cascade of large slope.
\begin{lemma}\label{lemma-2ind-or-steep}
Let $d$, $s$, $c$, and $m\ge (2c+1)d+2c$ be positive integers.
Let $G$ be a graph of maximum average degree at most $2d$ and let $b$ be the maximum
size of an $m$-nearly $2$-independent set in $G$.  If $\alpha_2(G)<\tfrac{b}{48m^2s^2}$, then an $m$-cascade
of slope at least $(s,c)$ appears in $G$.
\end{lemma}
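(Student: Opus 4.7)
My plan is to apply Lemma~\ref{lemma-outdom} to a maximum $m$-nearly $2$-independent set $B$ of $G$ (so $|B|=b$), obtaining sets $X\subseteq V(G)\setminus B$ and $D\subseteq V(G)$ with $|X|\le b$ and $|D|\le \tfrac{(d+1)b}{m-d}\le \tfrac{b}{2c}$ (the second inequality uses the hypothesis $m\ge(2c+1)d+2c$, equivalently $m-d\ge 2c(d+1)$) such that every vertex of $V(G)\setminus(B\cup X\cup D)$ has a neighbor in $D$. I then construct the desired $m$-cascade $(H,\psi)$ by assigning color $\mathtt{v}$ to $V':=B\setminus D$, color $\mathtt{d}$ to $D':=D$, and color $\mathtt{e}$ to $E':=V(G)\setminus(B\cup X\cup D)$; for each $u\in E'$ I pick one neighbor $d(u)\in D$ and include in $H$ only the edge $u\,d(u)$ together with all edges from $u$ into $V'$. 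Since each $u\in E'$ lies outside $B$, the $m$-nearly $2$-independence of $B$ gives $|N_G(u)\cap V'|\le m$, so $(H,\psi)$ is a valid $m$-cascade whose foundation $F$ has vertex set $V'$.

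The bound $|V'|\ge b-|D|\ge b/2$ combined with $|D|\le b/(2c)$ immediately yields $|D|\le |V'|/c$, settling one half of the slope requirement. To obtain $\alpha(F)\le |V'|/s$ I argue by contradiction: if an independent set $A\subseteq V'$ in $F$ had size $|A|>|V'|/s\ge b/(2s)$, then because $A$ is independent in $F$, no two vertices of $A$ share a common neighbor in $E'$, forcing every common neighbor in $G$ of a pair from $A$ to lie in $V(G)\setminus E'=B\cup X\cup D$.

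The key step is then a counting argument on the auxiliary graph $P$ on vertex set $A$ whose edges are pairs at distance at most two in $G$. Every independent set of $P$ is a $2$-independent set of $G$, so $\alpha(P)\le\alpha_2(G)<\tfrac{b}{48m^2s^2}$. On the other hand, each edge of $P$ is witnessed either by an edge of $G[A]$ (at most $\tfrac{(m-1)|A|}{2}$ such edges, since $A\subseteq B$) or by a common neighbor in $B\cup X\cup D$; each such common neighbor has at most $m$ neighbors in $A\subseteq B$ and hence contributes at most $\binom{m}{2}$ pairs, and $|B\cup X\cup D|\le 2b+b/(2c)\le 3b$. This gives $|E(P)|\le \tfrac{m|A|}{2}+\tfrac{3m^2b}{2}$, so using $|A|>b/(2s)$ the average degree of $P$ falls below $7m^2s$, and Lemma~\ref{lemma-alpha} produces $\alpha(P)\ge |A|/(8m^2s)$. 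Combining with the upper bound on $\alpha(P)$ yields $|A|<b/(6s)$, contradicting $|A|>b/(2s)$ and completing the proof.

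The main obstacle is simply book-keeping the constants so that the specific value $48$ in the hypothesis $\alpha_2(G)<b/(48m^2s^2)$ absorbs both the factor of two lost in passing from $B$ to $V'=B\setminus D$ and the constant loss in converting the average-degree bound for $P$ into an independence-number bound via Lemma~\ref{lemma-alpha}.
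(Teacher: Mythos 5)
Your proof is correct. The overall skeleton matches the paper's: apply Lemma~\ref{lemma-outdom} to a maximum $m$-nearly $2$-independent set $B$, use $B\setminus D$, $D$, and $V(G)\setminus(B\cup X\cup D)$ as the colors \texttt{v}, \texttt{d}, \texttt{e}, and observe that the only delicate point is bounding the independence number of the foundation, since common neighbors lying in $B\cup X\cup D$ do not yield foundation edges. Where you diverge is in how that bound is obtained. The paper first forms the auxiliary graph $F$ on all of $B$ (edges given by arbitrary common neighbors), proves $\alpha(F)\le 2\alpha_2(G)$ via a maximum-degree-one argument, then passes to the subgraph $F'$ whose edges require a common neighbor outside $B\cup X\cup D$ and invokes the technical Lemma~\ref{lemma-noincr} to control $\alpha(F')$. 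You instead argue by contradiction on a single hypothetical independent set $A$ of the foundation of size exceeding $|S_{\mathtt{v}}|/s$, count the pairs of $A$ at distance at most two (at most $\tfrac{(m-1)|A|}{2}$ edges inside $A$ plus at most $\binom{m}{2}\cdot 3b$ pairs witnessed by vertices of $B\cup X\cup D$), and apply Lemma~\ref{lemma-alpha} directly to the resulting ``distance at most two'' graph on $A$; I checked the constants ($|D|\le b/(2c)$, average degree below $7m^2s$, hence $\alpha(P)\ge |A|/(8m^2s)$, hence $|A|<b/(6s)$, contradicting $|A|>b/(2s)$) and they work out. Your route bypasses Lemma~\ref{lemma-noincr} and the intermediate graph $F'$ entirely, at the cost of a slightly more involved in-line count; it yields the same quantitative conclusion with the same threshold $b/(48m^2s^2)$.
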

\begin{proof}
Let $B$ be an $m$-nearly $2$-independent set in $G$ of size $b$.
We plan to use a large part of $B$ for the color-\texttt{v} vertices of the $m$-cascade.
For that, we need to argue that it is possible to select vertices of color \texttt{e} so that
the corresponding foundation graph has small independence number.

Let us first consider the auxiliary graph $F$ with vertex set $B$ and with distinct vertices $u,v\in B$ adjacent
exactly if they have a common neighbor in $G$ (possibly even one belonging to $B$).
We claim that $\alpha(F)\le 2\alpha_2(G)$.  Indeed, consider any independent set $A$ in $F$.
No two vertices in $A$ have a common neighbor in $G$, and in particular the subgraph $G[A]$ has maximum degree at most one.
Thus, there exists a set $A_0\subseteq A$ of size at least $|A|/2$ which is also independent in~$G$.
Since no two vertices of $A_0$ have a common neighbor in $G$, the set $A_0$ is $2$-independent in $G$, and thus
$|A|\le 2|A_0|\le 2\alpha_2(G)$.  By the assumptions of the Lemma, it follows that
\begin{equation}\label{eq-smis}
\alpha(F)<\frac{b}{24m^2s^2}.
\end{equation}
By Lemma~\ref{lemma-outdom}, there exist sets $X\subseteq V(G)\setminus B$ and $D\subseteq V(G)$ such that
$|X|\le b$, $|D|\le \tfrac{(d+1)b}{m-d}$, and every vertex of $V(G)\setminus (B\cup X\cup D)$ has a neighbor in~$D$.
Since $m\ge (2c+1)d+2c$, we have
\begin{equation}\label{eq-di}
|D|\le \frac{b}{2c}\le\frac{b}{2}.
\end{equation}
We are going to use $D$ for the color-\texttt{d} vertices of the $m$-cascade.
A minor issue with this is that some vertices of $B\cup D$ (or vertices of $X$, which are not necessarily dominated by $D$)
can be contributing edges to the candidate $F$ for the foundation of this $m$-cascade.
Thus, let $F'$ be the spanning subgraph of $F$ where distinct vertices $u$ and $v$ are adjacent if and only if
in $G$, they have a common neighbor belonging to $V(G)\setminus (B\cup X\cup D)$.
Since $B=V(F')$ is an $m$-nearly $2$-independent set in $G$, we have
$$|E(F)\setminus E(F')|\le \binom{m}{2}|B\cup X\cup D|<\frac{3m^2b}{2}.$$
Let $r=\tfrac{3}{2}m^2$ and $a=\tfrac{\alpha(F)}{|V(F)|}=\tfrac{\alpha(F)}{b}$; by (\ref{eq-smis}), we have $a<\tfrac{1}{24m^2s^2}$.
By Lemma~\ref{lemma-noincr}, we have
$$\alpha(F')\le 2\sqrt{ra}\cdot b<2\sqrt{\frac{3m^2}{2\cdot 24m^2s^2}}\cdot b=\frac{b}{2s}.$$
Let $S_{\mathtt{v}}=B\setminus D$; by (\ref{eq-di}), we have $|S_{\mathtt{v}}|\ge b/2$, and thus
$$\alpha(F'[S_{\mathtt{v}}])\le \alpha(F')\le \frac{b}{2s}\le \frac{|S_{\mathtt{v}}|}{s}.$$
Let $S_{\mathtt{d}}=D$; by (\ref{eq-di}), we have
$$|S_{\mathtt{d}}|\le \frac{b}{2c}\le \frac{|S_{\mathtt{v}}|}{c}.$$
Let $S_{\mathtt{e}}=V(G)\setminus (B\cup X\cup D)$.
Let $H$ be the subgraph of $G$ with vertex set $S_{\mathtt{v}}\cup S_{\mathtt{e}}\cup S_{\mathtt{d}}$
containing for each vertex $v\in S_{\mathtt{e}}$
\begin{itemize}
\item a single edge from $v$ to a neighbor of $v$ in $G$ belonging to $S_{\mathtt{d}}=D$ and
\item all edges from $v$ to neighbors of $v$ in $G$ belonging to $S_{\mathtt{v}}$
(there are at most $m$ of them).
\end{itemize}
Let $\psi$ be
the coloring of $H$ assigning the color \texttt{v} to all vertices of $S_{\mathtt{v}}$, the color~\texttt{e} to all vertices of $S_{\mathtt{e}}$,
and the color \texttt{d} to all vertices of $S_{\mathtt{d}}$.  Then $(H,\psi)$ is an $m$-cascade with foundation $F'[S_{\mathtt{v}}]$
and slope at least $(s,c)$ appearing in $G$.
\end{proof}

Our main result now easily follows.

\begin{proof}[Proof of Theorem~\ref{thm-main}]
If the class~$\GG$ contains cascades of arbitrarily large slope, then by Observation~\ref{obs-casc},
it does not have bounded domination-to-2-independence ratio.

Suppose now that there exists a positive integer $c$ such that every cascade contained in $\GG$ has slope less than $c$.
Let $d\ge 3$ be an integer such that every graph in $\GG$ has average degree at most $2d$; since the class~$\GG$ is monotone,
all graphs in $\GG$ have maximum average degree at most $2d$.  Let $m=(2c+1)d+2c$ and $s=\lceil m^4c\log(ec)\rceil$.
By Corollary~\ref{cor-relaxcasc}, no $m$-cascade of slope at least $(s,c)$ is contained in $\GG$.
We claim that
$$\gamma(G)\le 48(4d+2)m^2s^2\cdot \alpha_2(G)$$
holds for every $G\in \GG$.  Indeed, by Lemmas~\ref{lemma-outdeg} and \ref{lemma-near2ind}, $G$ contains a $(2d+1)$-nearly $2$-independent
set of size at least $\tfrac{1}{4d+2}\gamma(G)$.  Let $b$ be the maximum size of an $m$-nearly $2$-independent set in $G$;
since $m>2d+1$, we have $b\ge \tfrac{1}{4d+2}\gamma(G)$.  Since $\GG$ is monotone and no $m$-cascade of slope at least $(s,c)$ is contained in $\GG$,
no $m$-cascade of slope at least $(s,c)$ appears in $G$.  Therefore, by Lemma~\ref{lemma-2ind-or-steep},
we have
$$\alpha_2(G)\ge \frac{b}{48m^2s^2}\ge \frac{\gamma(G)}{48(4d+2)m^2s^2}.$$
It follows that the class~$\GG$ has bounded domination-to-2-independence ratio.
\end{proof}

\section{Graphs without clique subdivisions}\label{sec-vc2}

Finally, we show that the class $\CC_{k,d}$ has domination tied to $2$-independence.

\begin{proof}[Proof of Theorem~\ref{thm-vc2}]
Without loss of generality, we can assume that $d\ge 2$.
Let $d'=\lceil d/2\rceil$, so that $d\le 2d'\le d+2$.
For a positive integer $m$, let $I(m)$ be the multicolor Ramsey number $R\bigl(m,\binom{2d'+1}{2}\times k\bigr)$;
that is, for every (not necessarily proper) coloring of the edges of a clique with at least $I(m)$ vertices using
$\binom{2d'+1}{2}+1$ colors, there exists either a subclique of size $m$ whose edges all have the first color,
or a subclique of size $k$ whose edges all have one of the remaining colors.
For every positive integer $x$, let $h(x)=8d^2I(x+1)$; we claim that every graph $G\in\CC_{k,d}$ satisfies $\gamma(G)\le h(\alpha_2(G))$.

Indeed, consider any such graph $G$.  By Lemma~\ref{lemma-outdeg}, $G$ has an orientation of maximum outdegree
at most $d'$, and by Lemma~\ref{lemma-near2ind}, $G$ contains a $(2d'+1)$-nearly $2$-independent set $A'$ of size at least
$\tfrac{1}{4d'+2}\gamma(G)\ge \tfrac{1}{2d+6}\gamma(G)$.   By Lemma~\ref{lemma-alpha} applied to the graph $G[A']$ (whose average degree is at most $d$), there exists
an independent set $A\subseteq A'$ of size at least $\tfrac{|A'|}{d+1}\ge \tfrac{1}{8d^2}\gamma(G)$.

Let $\varphi$ be a (not necessarily proper) coloring of the edges of the clique $K$
with vertex set $A$ using colors $\bigl\{0,\ldots,\binom{2d'+1}{2}\bigr\}$ obtained as follows.
Each edge $xy\in E(K)$ such that the vertices $x$ and $y$ have a common neighbor in $G$ (necessarily outside
of $A$, since $A$ is an independent set) is assigned to an arbitrarily chosen common neighbor.
Since the set $A$ is $(2d'+1)$-nearly $2$-independent, at most $\binom{2d'+1}{2}$ edges of $K$ are assigned to each vertex in $V(G)\setminus A$.
For each vertex in $V(G)\setminus A$, we give each edge of $K$ assigned to it a distinct color in $\bigl\{1,\ldots,\binom{2d'+1}{2}\bigr\}$.
The edges $xy$ of $K$ such that the vertices $x$ and $y$ do not have any common neighbors in $G$ are given color $0$.

Note that for every $i\in \bigl\{1,\ldots,\binom{2d'+1}{2}\bigr\}$, the $1$-subdivision of the subgraph of $K$
formed by the edges of color $i$ is a subgraph of $G$: Indeed, the ends of any such edge have a common neighbor in $G$,
and these common neighbors are pairwise different for distinct edges of the same color by the construction of $\varphi$.
Since $G$ does not contain the $1$-subdivision of $K_k$ as a subgraph, it follows that $K$ does not contain any subclique
of size $k$ with all edges of color $i$.

Let $m$ be the largest integer such that $I(m)\le \tfrac{1}{8d^2}\gamma(G)$.
By the definition of $I(m)$, we conclude that $K$ contains a subclique $Q$ of size $m$ with all edges of color $0$.
By the definition of the coloring of $K$, the vertices of $Q$ form a $2$-independent set in $G$.
Therefore,
$$h(\alpha_2(G))\ge h(|V(Q)|)=h(m)=8d^2I(m+1)>\gamma(G)$$
by the choice of $m$.
\end{proof}

\section*{Acknowledgments}

We would like to thank Fanny Hauser for insightful discussions.

\bibliographystyle{alpha}
\bibliography{bibliography}

\end{document}